\documentclass[11pt,reqno]{amsart}
\usepackage[T1]{fontenc}
\usepackage[utf8]{inputenc}
\usepackage[english]{babel}
\usepackage{amssymb}
\usepackage{mathtools}
\usepackage[margin=2.7cm]{geometry}
\usepackage{tikz}
\tikzset{
  p2 picture/.style={every node/.style={font=\scriptsize}},
  p2 index/.style={font=\scriptsize,text=black!85},
  p2 grid/.style={draw=black!35,very thin},
  p2 interval/.style={draw=black,fill=white,thick},
  p2 path/.style={draw=black,semithick},
  p2 guide/.style={draw=black!55,densely dotted}}

\usepackage{csquotes}
\usepackage[backend=biber, style=numeric, sorting=none,
  giveninits=true, maxbibnames=99]{biblatex}
\renewbibmacro{in:}{}
\usepackage[unicode,hidelinks]{hyperref}
\hypersetup{pdftitle={Counting Lie ideals of niltriangular matrices},
  pdfauthor={N. D. Khodyunya},
  pdfkeywords={Lie algebra ideal, niltriangular matrix, enumeration of ideals, nonnesting set partition}}

\newtheorem{theorem}{Theorem}
\newtheorem{corollary}[theorem]{Corollary}
\newtheorem{lemma}[theorem]{Lemma}
\theoremstyle{definition}
\newtheorem{definition}[theorem]{Definition}

\newcommand{\coloneq}{\vcentcolon=}
\newcommand{\F}{\mathbb{F}_q}
\newcommand{\NT}[1]{\mathfrak{nt}_{#1}}
\newcommand{\qbin}[2]{\genfrac{[}{]}{0pt}{}{#1}{#2}_q}

\begin{document}

\title[Lie ideals of niltriangular matrices]{Counting Lie ideals of niltriangular matrices}
\author{N.~D.~Khodyunya}
\address{Siberian Federal University, Krasnoyarsk, Russia}
\email{nkhodyunya@gmail.com}
\keywords{Lie algebra ideal, niltriangular matrix, enumeration of ideals,
	nonnesting set partition}
\subjclass[2020]{Primary 17B30; Secondary 05A15, 05E16}

\begin{abstract}
We give an explicit finite sum for the number of ideals of the Lie algebra
of strictly lower triangular $n\times n$ matrices over $\mathbb F_q$,
valid for every prime power $q$. The sum runs over integer compositions,
with weights expressed using ordinary and Gaussian binomial coefficients.
A contraction bijection transforms Gagnon's configuration sum into a
weighted enumeration of nonnesting partitions, with antichains of intervals
chosen independently in each block. We derive a Stieltjes continued
fraction for the block weights. Lagrange inversion and the coefficient
formula for Stieltjes--Rogers polynomials then yield the explicit sum.
\end{abstract}

\maketitle
\raggedbottom

\section{Introduction}

For $n\ge1$ and a prime power $q$, let $P_n(q)$ denote the number of
ideals of the Lie algebra $\NT{n}(\F)$ of strictly lower
triangular $n\times n$ matrices. The enumeration of these ideals is the
type $A_{n-1}$ case of Problem~2 in~\cite{sigsam2001}.
% Russian original: Theorem 2, pp. 567--568.
Levchuk described these ideals in~\cite{vL76}.
Gagnon's classification by labelled tight splices and matroids
\cite[Theorem~4.1]{Gagnon21} gives a finite configuration sum for
$P_n(q)$~\cite[Corollary~4.2]{Gagnon21}.

We evaluate this sum using the contraction bijection of
Theorem~\ref{th:contraction}. It replaces the configurations by
nonnesting partitions together with antichains of intervals of
internal block elements, chosen independently in each block.
In \S\ref{sec:blocks}, we derive a continued fraction for the block weights
by decomposing noncrossing partitions at the block containing $1$.
The same decomposition for the weighted partitions, followed by
Lagrange inversion, reduces the count to coefficient extractions.
Flajolet's formula for Stieltjes--Rogers polynomials evaluates these
coefficients, giving the explicit sum over integer compositions in
Theorem~\ref{th:ideal-count}.

In the configuration sum, the terms with no selected gluing components
recover the count for ideals of the associative algebra of strictly
lower triangular matrices, the type $A_{n-1}$ case of Problem~1
in~\cite{sigsam2001}.

\section{Configurations and ideal counts}\label{sec:model}

A \emph{set partition} of $[n]\coloneq\{1,\dots,n\}$ is a collection
of pairwise disjoint nonempty subsets, called \emph{blocks}, whose
union is $[n]$. We take $[0]\coloneq\varnothing$.

An \emph{arc} of a set partition is a pair $(a,b)$, $a<b$, of elements
consecutive in a block. A partition is \emph{nonnesting} if it has no
arcs $(a,b),(a',b')$ with $a<a'<b'<b$ \cite[pp.~1556, 1565]{CDDSY07}.
Let $\mathrm{NN}_n$ denote the set of nonnesting partitions of $[n]$.
For a set partition $\lambda$, let $m(\lambda)$ denote its number of arcs.
An element of a block is \emph{internal} if it is neither its minimum
nor its maximum. An \emph{interval} of a block is a nonempty set of
consecutive elements in the order on the block, which need not be
consecutive integers.
For integers $a\le b$, write $[a,b]\coloneq\{a,a+1,\dots,b\}$.

For a partition $\lambda$ of $[n]$, its sets of left and right arc endpoints are
\[
 D(\lambda)\coloneq[n]\setminus\{\max C:C\in\lambda\},\qquad
 R(\lambda)\coloneq[n]\setminus\{\min C:C\in\lambda\}.
\]
Write
$\operatorname{succ}\colon D(\lambda)\to R(\lambda)$ for the map sending each element
to the next element of its block, and
$\operatorname{pred}$ for its inverse.

\begin{samepage}
We use the following endpoint description of nonnesting partitions
\cite[pp.~1565--1566]{CDDSY07}.

\begin{lemma}\label{l:ctr-model}
The map sending a nonnesting partition of $[n]$ to its sets
$D=\{d_1<\cdots<d_m\}$ and $R=\{r_1<\cdots<r_m\}$ of left and right
arc endpoints is a bijection onto the pairs of equally sized subsets of $[n]$
satisfying $d_i<r_i$ for $1\le i\le m$.
The corresponding partition has arcs $(d_i,r_i)$.
\end{lemma}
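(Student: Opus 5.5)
The plan is to show three things. The map $\lambda\mapsto(D(\lambda),R(\lambda))$ lands in the stated set. In a nonnesting partition the arcs pair $D$ and $R$ in increasing order. Any admissible pair $(D,R)$ comes from a partition, namely the one whose arcs are $(d_i,r_i)$.

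First, for any set partition $\lambda$ the map $\operatorname{succ}$ is a bijection $D(\lambda)\to R(\lambda)$. So $|D|=|R|=m(\lambda)$, and the arcs are exactly the pairs $(d,\operatorname{succ}(d))$.

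The key step is that in a nonnesting partition $\operatorname{succ}$ is order preserving. Take two arcs $(a,b)$ and $(a',b')$ with $a<a'$. If $b'<b$ we would have $a<a'<b'<b$, a nesting, which is excluded. The case $b'=b$ is impossible because $\operatorname{succ}$ is injective. Hence $b<b'$. An increasing bijection between $\{d_1<\dots<d_m\}$ and $\{r_1<\dots<r_m\}$ must send $d_i$ to $r_i$. So the arcs are $(d_i,r_i)$ and $d_i<r_i$ holds. This proves both the description of the arcs and injectivity of the map, since a partition is determined by its arcs: the blocks are the connected components of the graph on $[n]$ with these arcs as edges.

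For surjectivity, take $D=\{d_1<\dots<d_m\}$ and $R=\{r_1<\dots<r_m\}$ with $d_i<r_i$. Form the graph $G$ on $[n]$ with edges $d_i r_i$. Each vertex has at most one edge going to a larger element, since the $d_i$ are distinct. It also has at most one edge coming from a smaller element, since the $r_i$ are distinct. So every component of $G$ is an increasing path. Let $\lambda$ be the partition into these components. The consecutive pairs in each block are exactly the edges, so the arcs of $\lambda$ are the $(d_i,r_i)$, with $D(\lambda)=D$ and $R(\lambda)=R$. It remains to check that $\lambda$ is nonnesting. Take arcs $(d_i,r_i)$ and $(d_j,r_j)$ with $d_i<d_j$. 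Then $i<j$, so $r_i<r_j$, and a nesting $d_i<d_j<r_j<r_i$ cannot occur.

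I expect no real obstacle here. The only point needing care is the surjectivity step, namely that the constructed graph consists of increasing paths, so that its edges are precisely the arcs of a partition. That follows from the distinctness of the $d_i$ and of the $r_i$ as noted above. This agrees with the cited description in \cite[pp.~1565--1566]{CDDSY07}.
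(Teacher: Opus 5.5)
Your proof is correct and complete. The paper gives no argument of its own for this lemma; it only cites \cite[pp.~1565--1566]{CDDSY07}, so your proposal supplies what the paper delegates.

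For a nonnesting partition, you show that $\operatorname{succ}\colon D\to R$ is an increasing bijection: a decrease would create a nesting, and equality would contradict injectivity. It therefore pairs $d_i$ with $r_i$. Conversely, the edges $d_ir_i$ give each vertex at most one upward and one downward edge, so the components are increasing paths, and an order-preserving matching cannot nest. This is the standard first-in-first-out matching of left and right endpoints behind the reference.

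Your argument also covers both directions the paper uses later. In the proof of Lemma~\ref{l:ctr-arcs}, the paper needs that a nonnesting partition has an increasing successor map, and that an increasing successor map forces nonnesting. In the reconstruction step of Theorem~\ref{th:contraction}, it needs that an increasing source-to-target bijection, with every source below its target, defines a nonnesting partition.

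The step you flagged can be made fully explicit with one remark. A vertex of degree two cannot have both edges upward or both downward, so the orientation is consistent along each component. A cycle is impossible because following upward edges strictly increases.
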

\end{samepage}

\begin{definition}\label{def:components}
A \emph{(gluing) component} of $\lambda\in\mathrm{NN}_n$ is a nonempty
proper suffix $\gamma$ of one block such that
$\gamma+1\coloneq\{l+1:l\in\gamma\}$ is a proper prefix of another block.
Write $\Gamma(\lambda)$ for the set of these components.
A \emph{configuration} on $[n]$ is a pair
$(\lambda,S)$ with $\lambda\in\mathrm{NN}_n$ and $S\subset\Gamma(\lambda)$; put
$k(S)\coloneq\sum_{\gamma\in S}|\gamma|$.
\end{definition}

A \emph{link} of $\lambda\in\mathrm{NN}_n$ is an integer
$l\in R(\lambda)$ with $l+1\in D(\lambda)$.
Each element of a component is a link. A link determines
its two blocks and at most one component: the prefix of the second
block must end at one plus the maximum of the first. Thus distinct
components have disjoint link sets. For a configuration $(\lambda,S)$,
distinct links give distinct arcs $(\operatorname{pred}(l),l)$,
so $k(S)\le m(\lambda)$.
Consequently $m(\lambda)-k(S)+|S|\ge0$, with equality precisely for
the partition into singletons and $S=\varnothing$.

Let $V_q(m)$ denote the number of subspaces of $\mathbb F_q^m$
contained in no coordinate hyperplane. By inclusion--exclusion
\cite[Lemma~2.3]{smj2023},
\begin{equation}\label{eq:V-total}
 V_q(m)=\sum_{i=0}^m(-1)^i\binom mi
                  \sum_{t=0}^{m-i}\qbin{m-i}{t},
\end{equation}
where $\qbin{a}{b}$ is a Gaussian binomial coefficient.
\begin{theorem}[Gagnon {\cite[Corollary~4.2]{Gagnon21}}, in our notation]\label{th:master}
For every $n\ge1$ and every prime power $q$, the number $P_n(q)$ of
ideals of $\NT{n}(\F)$ is
\begin{equation}\label{eq:main}
 P_n(q)=\sum_{\lambda\in\mathrm{NN}_n}
        \sum_{S\subset\Gamma(\lambda)}
        (q-1)^{k(S)}V_q\bigl(m(\lambda)-k(S)+|S|\bigr),
\end{equation}
where $V_q(m)$ is given by~\eqref{eq:V-total}.
\end{theorem}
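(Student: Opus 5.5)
This theorem is a translation of Gagnon's result, so the plan is a dictionary between Gagnon's objects and ours rather than a new count. Gagnon's formula \cite[Corollary~4.2]{Gagnon21} writes $P_n(q)$ as a sum over labelled tight splices, weighted by counts of representable matroid data. The proof will set up a weight-preserving correspondence between his summation index and our configurations $(\lambda,S)$. We then check that the summand in each term is $(q-1)^{k(S)}V_q(m(\lambda)-k(S)+|S|)$.

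\textbf{Step 1: splices as nonnesting partitions.} In Levchuk's description \cite{vL76}, an ideal of $\NT{n}(\F)$ is governed by a set of matrix positions $(i,j)$, $i>j$. Its corners form an antichain in the root poset of $A_{n-1}$. We identify such data with a nonnesting partition via Lemma~\ref{l:ctr-model}: a corner at position $(r,d)$ becomes the arc $(d,r)$. The condition $d_i<r_i$ after sorting is exactly the condition that the corners sit strictly below the diagonal and form an antichain. Under this identification Gagnon's ``splices'' become the arcs, so their number is $m(\lambda)$.

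\textbf{Step 2: tight gluing as components.} Gagnon's ``tight'' gluings join two splices along a run of consecutive positions. We match these with the gluing components of Definition~\ref{def:components}. A suffix $\gamma$ of one block whose shift $\gamma+1$ is a prefix of another block is precisely a diagonal run of adjacent corners whose entries are forced to be proportional. Choosing a component to glue ($\gamma\in S$) merges its $|\gamma|$ link arcs into a single free coordinate. This leaves $m(\lambda)-k(S)+|S|$ independent coordinates. The $|\gamma|$ proportionality constants each range over $\F^\times$, which accounts for the factor $(q-1)^{k(S)}$. The remarks before the theorem guarantee this is well posed: distinct components have disjoint link sets, and $m(\lambda)-k(S)+|S|\ge0$, so the independent choices over $S$ do not interfere.

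\textbf{Step 3: the matroid weight.} In Gagnon's formula, the remaining sum over matroids on the surviving coordinates counts subspaces of $\F^{m}$ that avoid every coordinate hyperplane, i.e.\ in which no coordinate vanishes identically. That number is $V_q(m)$. Formula~\eqref{eq:V-total} then follows from \cite[Lemma~2.3]{smj2023}: inclusion--exclusion over the set of $i$ coordinates forced to vanish, followed by summing $\qbin{m-i}{t}$ over all dimensions $t$.

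\textbf{Main obstacle.} The delicate point is Step~2: showing that Gagnon's tight splice labels correspond bijectively to pairs $(\lambda,S)$, with no double counting when runs are adjacent. I would first check the proper-suffix and proper-prefix conditions against his definition of tightness. I would then confirm the translation numerically for small $n$, for example by checking that the $S=\varnothing$ terms reproduce the associative-algebra count mentioned in the introduction.
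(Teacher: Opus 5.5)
Your strategy is the paper's: the theorem restates \cite[Corollary~4.2]{Gagnon21}, so the proof is entirely a dictionary. The gap is that you guess the dictionary rather than establish it, and the guess is partly wrong. In Step~1 you identify Gagnon's splices with individual arcs and treat $m(\lambda)$ as the relevant statistic. In fact, after reversing the basis to match Gagnon's upper triangular realization, his sum runs over tight splices of a nonnesting partition $\lambda$, that is, the arc set of $\lambda$ together with a set of rows. The weight depends on the numbers of rows, columns and bindings, not on $m(\lambda)$.

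What has to be proved is the following.
(i) For each $\gamma\in\Gamma(\lambda)$, the set $\mathcal R_\gamma=\{(\operatorname{pred}(l),l+1):l\in\gamma\}\cup\{(\max\gamma,\operatorname{succ}(\max\gamma+1))\}$ is a row: $\lambda\sqcup\mathcal R_\gamma$ satisfies (S1), (S2) and (T), and the suffix and prefix conditions prevent extending it. Conversely, by Gagnon's row description \cite[Lemma~3.1]{Gagnon21}, every row arises from exactly one component.
(ii) The tight splices of $\lambda$ correspond to arbitrary subsets of the rows of the largest one \cite[Proposition~3.3]{Gagnon21}, hence to $S\subset\Gamma(\lambda)$.
(iii) Such a splice has $k(S)$ bindings, $|S|$ rows and $m(\lambda)-k(S)$ columns \cite[(3.2)]{Gagnon21}.

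Your picture of link arcs ``merging into one coordinate with proportionality constants'' establishes none of these. The disjointness of link sets that you cite does not show that every subset of components is admissible, or that no other splices occur. Deferring this to numerical checks for small $n$ leaves the main step unproved.

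Step~3 has a second gap. Gagnon's Corollary~4.2 expresses the weight through $q$-Stirling numbers of the second kind in his convention \cite[(2.2)]{Gagnon21}, not as a count of subspaces. Converting it to $V_q$ requires the identity \eqref{eq:stirling}, $V_q(m)=\sum_{t=0}^m(q-1)^{m-t}\left\{\begin{matrix}m\\t\end{matrix}\right\}_q$. The paper takes this from Milne's subspace interpretation \cite[Theorem~3.5]{milne1982}. Your sentence that the matroid sum ``counts subspaces of $\F^m$ avoiding every coordinate hyperplane'' is exactly this identity, asserted without proof. Once (i)--(iii) and \eqref{eq:stirling} are in place, substituting into Corollary~4.2 gives \eqref{eq:main}. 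The inclusion--exclusion \eqref{eq:V-total} then only serves to make $V_q$ explicit.
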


\subsection*{Relation to Gagnon's formula}

We use the terminology for splices, rows, columns and bindings
from~\cite[\S\,3]{Gagnon21}, identifying a partition with its set of
arcs in this subsection.
Reversing the order of the standard basis identifies $\NT{n}(\F)$
with the upper triangular realization used in \cite{Gagnon21}.
For $\lambda\in\mathrm{NN}_n$ and $\gamma\in\Gamma(\lambda)$, put
\[
 \mathcal R_\gamma=
 \{(\operatorname{pred}(l),l+1):l\in\gamma\}
 \cup\{(\max\gamma,\operatorname{succ}(\max\gamma+1))\}.
\]
The successor chains of $\gamma$ and $\gamma+1$ show that
$\lambda\sqcup\mathcal R_\gamma$ satisfies (S1), (S2) and (T)
of~\cite[\S\,3]{Gagnon21}; hence it is a tight splice with one row.
The suffix and prefix conditions prevent further extension of this row.
% Gagnon: row description preceding Lemma 3.1 and its proof, p. 12.
Conversely, the row description in~\cite[\S\,3.1, Lemma~3.1 and its proof]{Gagnon21}
together with (T) shows that every row arises uniquely in this way.
Thus $\Gamma(\lambda)$ corresponds to the rows of the largest tight
splice of $\lambda$, and~\cite[Proposition~3.3]{Gagnon21} identifies its
tight splices with subsets $S\subset\Gamma(\lambda)$.
Such a splice has $k(S)$ bindings, $|S|$ rows, and $m(\lambda)-k(S)$
columns by~\mbox{\cite[(3.2)]{Gagnon21}}.
For the $q$-Stirling numbers of the second kind in the convention
of~\cite[(2.2)]{Gagnon21}, the subspace interpretation
in~\cite[Theorem~3.5]{milne1982} gives
\begin{equation}\label{eq:stirling}
 V_q(m)=\sum_{t=0}^m(q-1)^{m-t}
               \left\{\begin{matrix}m\\t\end{matrix}\right\}_q.
\end{equation}
Substituting these statistics into \cite[Corollary~4.2]{Gagnon21}
and using~\eqref{eq:stirling} gives~\eqref{eq:main}.
The discrete partition contributes $V_q(0)=1$,
accounting for the zero ideal.

\section{Contracting gluing components}\label{sec:contraction}

In this section we turn the selected gluing components into intervals
inside the blocks of a smaller partition. The choices of intervals
will then be independent from block to block, allowing us to evaluate
\eqref{eq:main} in \S\ref{sec:blocks}.

\begin{theorem}\label{th:contraction}
    For $n\ge1$ and $0\le k<n$, there is a bijection $\Theta$ between
    \begin{itemize}
        \item configurations $(\lambda,S)$ with $\lambda\in\mathrm{NN}_n$,
              $S\subset\Gamma(\lambda)$, and $\sum_{\gamma\in S}|\gamma|=k$;
        \item pairs $(\lambda',\mathcal A)$, where $\lambda'$ is a nonnesting
              partition of $[n-k]$ and $\mathcal A$ is an antichain under inclusion of
              intervals of internal elements of its blocks, with
              $\sum_{I\in\mathcal A}|I|=k$.
    \end{itemize}
    Each selected component $\gamma\in S$ is mapped to an interval of
    $|\gamma|$ elements, and the resulting partition has
    $m(\lambda')=m(\lambda)-k+|S|$ arcs.
\end{theorem}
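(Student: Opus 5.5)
The plan is to define $\Theta$ by literally \emph{contracting} each selected component: for $\gamma\in S$, identify each $l\in\gamma$ with $l+1$. Then prove that the result is a nonnesting partition carrying an antichain of intervals, and construct the inverse ``expansion'' map.

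\emph{One component.} Write the two blocks as $B=B_0\sqcup\gamma$ and $B'=(\gamma+1)\sqcup B_1$. Here $B_0\neq\varnothing$ because $\gamma$ is a proper suffix, and $B_1\ne\varnothing$ because $\gamma+1$ is a proper prefix. Deleting the integers of $\gamma+1$ and relabelling $[n]\setminus(\gamma+1)$ order-preservingly onto $[n-|\gamma|]$ merges $B$ and $B'$ into one block $B_0\cup\gamma^*\cup B_1$, where $\gamma^*$ is the image of $\gamma$.

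The set $\gamma^*$ is an interval of this block, and it consists of internal elements. The arc count goes from $(|B|-1)+(|B'|-1)$ to $|B|+|B'|-|\gamma|-1$, a drop of $|\gamma|-1$. Summing over $S$ gives $m(\lambda')=m(\lambda)-k+|S|$ and $\sum_{I}|I|=k$.

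For several components I would delete all the sets $\gamma+1$, $\gamma\in S$, simultaneously. Distinct components have disjoint link sets, so these deletions are disjoint and the order in which they are performed does not matter. Even so, a merged block may absorb a chain of several components. I will check that their images are intervals of internal elements, none containing another. A nested pair would force two links determining the same component, contradicting the remark after Definition~\ref{def:components}.

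\emph{Nonnesting.} To show $\lambda'$ is nonnesting I would use Lemma~\ref{l:ctr-model}. The arcs of $\lambda'$ are the images of the arcs of $\lambda$, except that for $l\in\gamma$ the two arcs $(\operatorname{pred}(l),l)$ and $(l,\operatorname{succ}(l))$... more precisely, the arcs $(\operatorname{pred}(l+1),l+1)$ and $(l+1,\operatorname{succ}(l+1))$ are replaced by arcs through $l^*$. The arc $(\max\gamma,\cdot)$ disappears because $\max\gamma$ was a block maximum. I then compute $D(\lambda')$ and $R(\lambda')$ as images of $D(\lambda)$ and $R(\lambda)$ after removing the appropriate elements, and verify $d_i'<r_i'$ using $d_i<r_i$. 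By the Lemma, this set of pairs then determines a nonnesting partition. It remains to confirm that this partition coincides with the merged blocks, i.e.\ that the images keep the sorted matching $d_i\mapsto r_i$. I expect this to hold because the removed pairs $(l,l+1)$ are adjacent integers, so the ranks shift uniformly.

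\emph{Inverse.} Given $(\lambda',\mathcal A)$, expand each element $x$ of each $I\in\mathcal A$ into two consecutive integers $x,x+1$. Then split its block: the part up to and including the lower copies of $I$ forms one block, and the upper copies of $I$ followed by the rest form another. The antichain condition gives the intervals in a block a well-defined left-to-right order with distinct minima and maxima, so the splits are consistent. Each $I$ then returns a component: the lower copies form a proper suffix, and the upper copies form a proper prefix. Nonnesting is restored by the same endpoint computation run backwards. One checks that the two constructions are mutually inverse.

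\emph{Main obstacle.} The hardest step will be the interaction of several components (or intervals) in the same block chain, or overlapping intervals. There I must show that the antichain condition is exactly the image of ``$S\subset\Gamma(\lambda)$ with disjoint link sets'', and that nonnesting is preserved in both directions. My first attempt will be to handle everything through the $D/R$ description of Lemma~\ref{l:ctr-model}, with an induction on $|S|$ contracting one component at a time. This requires checking that the remaining components of $S$ are still components of the contracted partition.
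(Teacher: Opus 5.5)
\textbf{There is a genuine gap.} Your one-component analysis is sound: the merged block $B_0\cup\gamma^*\cup B_1$, the loss of $|\gamma|-1$ arcs, and nonnesting via Lemma~\ref{l:ctr-model} all work. But the case you defer as the ``main obstacle'' is where the theorem's content lies, and your stated constructions fail there.

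\emph{Overlapping components give fibres of three or more points.} The set $\gamma+1$ can meet another selected component $\gamma'$. Then consecutive integers $l,l+1$ are both selected links, and a vertex $z$ of $\lambda'$ is the image of $d_z+1\ge3$ integers, not two. For example, take $\lambda=\{\{1,2,4\},\{3,5,7\},\{6,8,9\}\}$ with $S=\{\{2,4\},\{5,7\}\}$. It contracts to the single block $[5]$ with $\mathcal A=\{\{2,3\},\{3,4\}\}$, and vertex $3$ is the image of $\{4,5,6\}$. Here $5$ is both the shift of a link of $\{2,4\}$ and a link of $\{5,7\}$. So ``deleting all sets $\gamma+1$'' deletes links of other selected components.

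\emph{Your inverse is not well defined there.} ``Expand each $x\in I$ into $x,x+1$ and split into lower and upper copies of $I$'' does not say what happens at vertex $3$. It needs three copies, and the middle one must be the upper copy for $\{2,3\}$ and the lower copy for $\{3,4\}$. The assignment matters. Suppose $\{3,4\}$ instead takes $4,5$ as link and shift at vertex $3$, and $\{2,3\}$ takes $5,6$. Then the candidate components are $\{2,5\}$ and $\{4,7\}$. But $\{2,5\}$ being a block suffix makes $5$ a block maximum, which contradicts $\{5,8\}$ being a proper block prefix.

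\emph{The missing rule is the paper's.} List the intervals through $z$ by left ends in $L_z$, and replace $z$ by $[c_z,c_z+d_z]$ with $d_z=|L_z|$. Give the $a$-th interval the link $c_z+a-1$ and the shift $c_z+a$. Then check, for each arc $z\to t$ of $\lambda'$, that the lists of continuing intervals obtained from $L_z$ and from $L_t$ coincide, so the increasing matching of the source and target groups is consistent. This is exactly where the antichain hypothesis enters: an interval ending at $z$ is first in $L_z$, and one starting at $t$ is last in $L_t$.

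\emph{Your antichain justification is not an argument.} Distinct components already have disjoint link sets, so no link ``determines two components''. Nesting is excluded by an ordering argument instead. In a fibre $[c,c+d]$, every point except $c$ lies in $D$ and every point except $c+d$ lies in $R$. Hence a component can end only at the smallest link of a fibre and start only at the largest. The increasing matching between $D\cap U_z$ and $R\cap U_t$ along each quotient arc preserves the order of continuing components. Therefore left and right ends of the images occur in the same strict order.

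\emph{``Ranks shift uniformly'' is not enough for nonnesting.} It covers collapsing a single pair $l,l+1$, but not fibres of size three or more. You need the fibre description of source and target groups.

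Your induction on $|S|$ could plausibly give the forward map: the other selected components do remain components after one contraction, with images taken through the identification. As written, however, the antichain property, the inverse, and the mutual inverseness of the two maps are all unproved in the overlapping case.
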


Figure~\ref{fig:contraction} illustrates the contraction of a component
with two links.
\begin{figure}[ht]
\centering
\begin{tikzpicture}[p2 picture,x=11mm,y=8mm]
  \draw[p2 path] (1,0) to[bend left=50] (2,0);
  \draw[p2 path] (2,0) to[bend left=32] (4,0);
  \draw[p2 path] (3,0) to[bend left=32] (5,0);
  \draw[p2 path] (5,0) to[bend left=50] (6,0);
  \foreach \i in {1,...,6} {
    \fill (\i,0) circle (1.5pt);
    \node[p2 index,anchor=north] at (\i,-0.13) {$\i$};}
  \draw[black!65,rounded corners=2pt] (1.75,-0.65) rectangle (3.25,0.2);
  \draw[black!65,rounded corners=2pt] (3.75,-0.65) rectangle (5.25,0.2);
  \node[anchor=west] at (6.65,0.5) {$\lambda=\bigl\{\{1,2,4\},\{3,5,6\}\bigr\}$};
  \node[anchor=west] at (6.65,-0.35) {one component: links $2,4$};
  \draw[->,black!75] (2.5,-0.8) -- (2.5,-2.3);
  \draw[->,black!75] (4.5,-0.8) -- (4.5,-2.3);
  \node at (3.5,-1.55) {$\Theta$};
  \begin{scope}[yshift=-24mm]
    \draw[p2 path] (1,0) to[bend left=45] (2.5,0);
    \draw[p2 path] (2.5,0) to[bend left=40] (4.5,0);
    \draw[p2 path] (4.5,0) to[bend left=45] (6,0);
    \foreach \x in {1,6} {\fill (\x,0) circle (1.5pt);}
    \foreach \x in {2.5,4.5} {\fill (\x,0) circle (2.2pt);}
    \foreach \x/\lab in {1/1,2.5/2,4.5/3,6/4} {
      \node[p2 index,anchor=north] at (\x,-0.13) {$\lab$};}
    \draw[thick] (2.5,-0.72) -- (4.5,-0.72);
    \node[anchor=north] at (3.5,-0.82) {interval $\{2,3\}$};
    \node[anchor=west] at (6.65,0.45) {$\lambda'=\bigl\{\{1,2,3,4\}\bigr\}$};
    \node[anchor=west] at (6.65,-0.4) {$k=2,\quad m(\lambda)=4,\quad m(\lambda')=3$};
  \end{scope}
\end{tikzpicture}
\caption{Contracting a component of two links.
    The boxed pairs $\{2,3\}$ and $\{4,5\}$ become points $2$ and $3$,
    respectively. The two blocks merge, and the selected component becomes
    the marked interval of internal block elements.}\label{fig:contraction}
\end{figure}

Fix a configuration $(\lambda,S)$ on $[n]$, and put
$K=\bigcup_{\gamma\in S}\gamma$ and $k=|K|=k(S)$.
Write $D=D(\lambda)$ and $R=R(\lambda)$ for the sets of left and right
arc endpoints. Identify each pair $l,l+1$ with $l\in K$, using
\begin{equation}\label{eq:contraction-map}
    \pi\colon[n]\to[n-k],\qquad \pi(i)=i-|K\cap[i-1]|,\qquad U_z=\pi^{-1}(z).
\end{equation}
The fibres $U_z$ are intervals of consecutive integers.

\begin{lemma}\label{l:ctr-arcs}
    The distinct pairs $(\pi(a),\pi(b))$, for arcs $(a,b)$ of $\lambda$,
    are the arcs of a nonnesting partition $\lambda'$ of $[n-k]$, with
    $m(\lambda')=m(\lambda)-k+|S|$.
    For each arc $z\to t$ of $\lambda'$, the arcs above it form the
    increasing matching between $D\cap U_z$ and $R\cap U_t$.
\end{lemma}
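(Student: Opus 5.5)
The plan is to verify the hypotheses of Lemma~\ref{l:ctr-model} for the image sets $\pi(D)$ and $\pi(R)$, after first understanding exactly which arcs of $\lambda$ are identified by $\pi$. The first step is a local description of a fibre. By \eqref{eq:contraction-map}, $U_z=[i,j]$ with $[i,j-1]\subset K$ and $j\notin K$. Each $l\in K$ is a link, so $l\in R$ and $l+1\in D$. Moreover, $l$ and $l+1$ lie in different blocks, namely the one carrying the suffix $\gamma\ni l$ and the one carrying the prefix $\gamma+1$. Since $\gamma$ is a suffix, $\max\gamma$ is a block maximum and $\min(\gamma+1)$ is a block minimum.

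Next I would show that no arc collapses, i.e.\ $\pi(a)<\pi(b)$ for every arc $(a,b)$. An arc inside a fibre $[i,j]$ would have to join two elements of that run. I would rule this out from the nonnesting property together with the successor chains of $\gamma$ and $\gamma+1$: inside a run, consecutive integers belong to the two different blocks of one component, and the arcs leaving or entering them go outside the run in parallel. I expect this to be the main obstacle. Several components may be adjacent in $[n]$ and share a fibre, so I would treat it by induction on $|S|$, contracting one component at a time. A single component of a nonnesting partition yields a nonnesting partition, and its new components correspond to the remaining ones in $S$.

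Then I would count the identifications. Take $\gamma\in S$ and $l\in\gamma$ with $l\ne\min\gamma$. Then $\operatorname{pred}(l)\in\gamma$ and $\operatorname{pred}(l+1)=\operatorname{pred}(l)+1$, by the successor chains of $\gamma$ and $\gamma+1$. Hence the arcs $(\operatorname{pred} l,l)$ and $(\operatorname{pred}(l)+1,l+1)$ have the same image. The arc into $\min\gamma$ has no partner, because $\min\gamma+1$ is a block minimum. The arc leaving $\max\gamma+1$ has no partner either, because $\max\gamma$ is a block maximum. I would then check that these are the only coincidences: if $\pi(a)=\pi(a')$ and $\pi(b)=\pi(b')$ with $a<a'$, then $a'$ and $b'$ lie in the fibres of $a$ and $b$, and this forces the link pattern above. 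This gives exactly $|\gamma|-1$ lost arcs per component, so $m(\lambda')=m(\lambda)-k+|S|$.

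Finally, for the matching statement and the nonnesting property, I would use the monotonicity of $\pi$. Write $D=\{d_1<\dots<d_m\}$ and $R=\{r_1<\dots<r_m\}$, and recall that the arcs of $\lambda$ are $(d_i,r_i)$ by Lemma~\ref{l:ctr-model}. Then the arcs with left end in $U_z$ and right end in $U_t$ form a set of consecutive indices $i$, that is, an increasing matching between $D\cap U_z$ and $R\cap U_t$. After removing duplicates, the distinct image pairs $(d'_i,r'_i)$ have strictly increasing first and second coordinates, because distinct images differ in both coordinates (shown in the previous step), and they satisfy $d'_i<r'_i$ because no arc collapses. Lemma~\ref{l:ctr-model} then shows that they are the arcs of a nonnesting partition $\lambda'$ of $[n-k]$.
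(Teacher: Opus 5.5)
Your proposal has a genuine gap at the step that carries the lemma. You never show that two arcs whose sources lie in the same fibre have their targets in the same fibre, nor the dual statement for targets. In your last paragraph you assert that distinct image pairs ``differ in both coordinates (shown in the previous step)''. The previous step, however, only analyses arcs whose images agree in \emph{both} coordinates. It says nothing about two arcs with $\pi(a)=\pi(a')$ but possibly $\pi(b)\ne\pi(b')$.

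Monotonicity of $\pi$ only tells you that the arcs from $U_z$ to $U_t$ form a run of consecutive indices. That is an increasing matching between \emph{some} elements of $D\cap U_z$ and \emph{some} of $R\cap U_t$, not all of them. Without the missing fact, a quotient vertex could a priori have two outgoing arcs. Then neither the application of Lemma~\ref{l:ctr-model} nor the claim that the matching exhausts $D\cap U_z$ and $R\cap U_t$ follows.

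The missing ingredient is the interval structure of a fibre. In $U=[c,c+d]$ one has $[c,c+d-1]\subset K\subset R$ and $[c+1,c+d]\subset K+1\subset D$, so $D\cap U$ and $R\cap U$ are intervals of consecutive integers. For consecutive sources $l,l+1$ in $U$ one has $l\in K\cap D$, and the successor chains of $\gamma$ and $\gamma+1$ give $\operatorname{succ}(l)\in K$ and $\operatorname{succ}(l+1)=\operatorname{succ}(l)+1$. Hence both successors lie in one fibre. Dually, consecutive targets satisfy $\operatorname{pred}(l)\in K$ and $\operatorname{pred}(l+1)=\operatorname{pred}(l)+1$.

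Your no-collapse step is also not actually proved. You defer it to an induction on $|S|$ whose two inductive claims are only asserted: that one contraction preserves nonnestingness, and that the other selected components survive as components, with the composite map equal to $\pi$. It is also unclear how this induction meshes with your third and fourth steps, which you carry out directly for general $S$. Nonnestingness is not the relevant tool here. Moreover, the fact that consecutive integers of a run lie in different blocks does not exclude an arc such as $(c,c+2)$ inside a longer run.

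Once the group property above is in hand, no induction is needed. The outgoing group of $U$ contains the arc from $\max U$, whose target exceeds $\max U$, so every arc leaves its fibre. The arc count then also follows without tracking coincidences. Quotient sources correspond to fibre maxima lying in $D$, that is, to $D\setminus K$. Also $|D\cap K|=k-|S|$, because each selected suffix $\gamma$ contains exactly one block maximum. Your pairwise coincidence count can be made to work, but ``this forces the link pattern above'' needs precisely the fact that $R\cap U$ is an interval.
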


\begin{proof}
    The component definition gives, for $l\in K$,
    \[
        \begin{aligned}
            l\in D   & \ \Longrightarrow\
            \operatorname{succ}(l)\in K,\quad
            \operatorname{succ}(l+1)=\operatorname{succ}(l)+1, \\
            l+1\in R & \ \Longrightarrow\
            \operatorname{pred}(l)\in K,\quad
            \operatorname{pred}(l+1)=\operatorname{pred}(l)+1.
        \end{aligned}
    \]
    In a nontrivial fibre $U=[c,c+d]$, the selected links are
    $K\cap U=[c,c+d-1]$. Every point except possibly $c$ belongs to $D$,
    and every point except possibly $c+d$ belongs to $R$.
    Thus $D\cap U$ and $R\cap U$ are intervals of consecutive integers.
    The displayed relations show that consecutive sources in $D\cap U$
    have successors in the same fibre, and consecutive targets in
    $R\cap U$ have predecessors in the same fibre.

    Consequently, collapsing the fibres in the arc diagram and identifying
    repeated arcs gives a directed graph with at most one incoming and
    one outgoing arc at each vertex. Every arc goes to a larger vertex:
    an outgoing group includes the arc from $\max U$.
    The graph therefore defines a partition $\lambda'$.
    Each quotient arc $U\to V$ comes from the increasing bijection
    between all of $D\cap U$ and all of $R\cap V$.
    The successor map of $\lambda'$ is increasing, since the original one is
    increasing and every target fibre has a unique source fibre.
    Hence $\lambda'$ is nonnesting by Lemma~\ref{l:ctr-model}.
    For a singleton fibre on an arc, its endpoint group consists of its only point.
    The quotient sources correspond bijectively to $D\setminus K$
    via fibre maxima. Thus contraction removes $|K\cap D|=k-|S|$ arcs.
\end{proof}

\begin{lemma}\label{l:ctr-images}
    The images $\pi(\gamma)$, $\gamma\in S$, are distinct intervals of
    internal elements of blocks of $\lambda'$ and form an antichain
    under inclusion. Each component meets a fibre at most once, so
    $|\pi(\gamma)|=|\gamma|$, and
    \[
        |U_z|=1+\bigl|\{\gamma\in S:z\in\pi(\gamma)\}\bigr|.
    \]
\end{lemma}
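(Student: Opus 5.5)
The plan is to work fibre by fibre, using the structure already extracted in the proof of Lemma~\ref{l:ctr-arcs}.

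\emph{Step 1: each component meets each fibre at most once.} Fix $\gamma\in S$ with elements $l_1<\dots<l_j$ in block order. By the component definition, $\operatorname{succ}(l_i)=l_{i+1}$ and $\operatorname{succ}(l_i+1)=l_{i+1}+1$. Consequently $l_{i+1}\ge l_i+2$: equality $l_{i+1}=l_i+1$ would make $l_i+1$ lie both in $\gamma$ and in $\gamma+1$, that is, in both blocks.

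Now suppose $l_i$ and $l_{i'}$ with $i<i'$ lie in one fibre $U=[c,c+d]$. Then $K\cap U=[c,c+d-1]$, so every integer strictly between them lies in $K$. In particular $l_i+1\in K$. Since $l_i+1$ belongs to the second block, it lies in some other selected component $\gamma'$. I would then run an induction along the fibre, using the displayed successor relations in the proof of Lemma~\ref{l:ctr-arcs}: the successors of consecutive points of $U$ stay consecutive, hence lie in a single fibre. This lets the collision be propagated forward until it contradicts nonnestingness or the properness of the suffix and prefix. I expect this to be the main obstacle, and I would first try to prove the cleaner claim that distinct elements of $K\cap U$ lie in distinct components.

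\emph{Step 2: images are intervals of internal elements.} Once Step 1 holds, $\pi$ is injective on $\gamma$, so $|\pi(\gamma)|=|\gamma|$. By Lemma~\ref{l:ctr-arcs}, the arcs $(l_i,l_{i+1})$ project to arcs of $\lambda'$, so $\pi(\gamma)$ is a run of consecutive elements of one block of $\lambda'$. For internality:
\begin{itemize}
\item the block of $\lambda'$ containing $\pi(\gamma)$ has an arc into $\pi(\gamma)$, because $\min\gamma$ has a predecessor, $\gamma$ being a proper suffix;
\item it has an arc out of $\pi(\gamma)$, because $\max\gamma+1$ has a successor, $\gamma+1$ being a proper prefix, and $\pi(\max\gamma)=\pi(\max\gamma+1)$.
\end{itemize}

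\emph{Step 3: the fibre size formula.} Each $z$ with $|U_z|=d+1$ contains exactly $d$ points of $K$. By Step 1 these lie in $d$ distinct components, each of which contributes $z$ to its image. Conversely, $z\in\pi(\gamma)$ forces a point of $\gamma$ into $U_z$. This gives $|U_z|=1+|\{\gamma\in S: z\in\pi(\gamma)\}|$.

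\emph{Step 4: antichain and distinctness.} Suppose $\pi(\gamma)\subseteq\pi(\gamma')$ with $\gamma\ne\gamma'$. Then the first and last points of $\pi(\gamma)$ lie in fibres that also contain points of $\gamma'$. I would compare the in-arc at $\pi(\min\gamma)$ with the in-arcs of the corresponding points of $\gamma'$. By the increasing-matching description in Lemma~\ref{l:ctr-arcs}, the predecessors $\operatorname{pred}(\min\gamma)$ and the corresponding element of $\gamma'$ lie in a common fibre, which puts them in $K$. That contradicts $\min\gamma$ starting its component, since the predecessor is not in $\gamma$ and a link determines its component. The symmetric argument at the maximum end, using $\operatorname{succ}(\max\gamma+1)$, handles the other side. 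Distinctness of the images follows as the special case of equality.
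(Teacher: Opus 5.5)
\textbf{Step 4 has a genuine gap.} The inference ``lie in a common fibre, which puts them in $K$'' is invalid. The maximum of a fibre is never in $K$. In fact $\operatorname{pred}(\min\gamma)\notin K$ always holds, because a selected component containing it would be a suffix of its block and would therefore contain $\min\gamma$. So the contradiction you aim for is never available.

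Your left-end argument uses only $\pi(\min\gamma)\in\pi(\gamma')$, and that situation really occurs. Take $\lambda=\{\{1,2,4\},\{3,5,7\},\{6,8,9\}\}$ and $S=\{\{2,4\},\{5,7\}\}$. The images are $\{2,3\}$ and $\{3,4\}$ in the single block $[5]$ of $\lambda'$, so $\pi(5)=3$ lies strictly inside $\pi(\{2,4\})$. Here $\operatorname{pred}(5)=3$ and $\operatorname{pred}(4)=2$ share the fibre $\{2,3\}$, yet $3\notin K$. No argument at a single end can exclude inclusion; you have to link the two ends.

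The missing idea, which is how the paper argues, is an order argument:
\begin{itemize}
\item Within a fibre $[c,c+d]$, a component can begin only at the largest link $c+d-1$, since it begins at $l$ exactly when $l+1\notin R$.
\item It can end only at the smallest link $c$, since it ends at $l$ exactly when $l\notin D$.
\item Along each arc of $\lambda'$, the increasing matching preserves the relative order of the continuing components.
\end{itemize}
So in each block the components behave like a queue: they enter on the right and leave on the left. Their left ends and right ends therefore occur in the same strict order, which gives distinctness and the antichain property together.

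\textbf{Step 1 is unfinished, but no induction is needed.} By Lemma~\ref{l:ctr-arcs}, each arc $(l_i,l_{i+1})$ joining consecutive elements of $\gamma$ maps to an arc $\pi(l_i)\to\pi(l_{i+1})$ of $\lambda'$. Hence $\pi(l_1)<\dots<\pi(l_j)$, which is exactly the fact you already use in Step~2. This gives injectivity of $\pi$ on $\gamma$ at once, and with it your Steps 2 and 3 are correct as written.
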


\begin{proof}
    Each selected component follows a path of the quotient graph, whose
    arcs go to larger vertices. Its image is therefore an interval of
    the same cardinality. Its vertices are internal, because $l\in R$
    and $l+1\in D$ for each selected link $l$.
    Within a fibre, a component can end only at the smallest link and
    can begin only at the largest link. Along a quotient arc, the order
    of the continuing components is preserved by the increasing matching
    of original arcs. New components thus enter this order on the right,
    while ending components leave it on the left. Their left and right
    ends occur in the same strict order within each block.
    Hence the images are distinct and form an antichain.
    Finally, the selected links in $U_z$ are all its vertices except its
    maximum. They belong to distinct selected components, which gives
    the formula for $|U_z|$.
\end{proof}

\begin{proof}[Proof of Theorem~\ref{th:contraction}]
    Lemmas~\ref{l:ctr-arcs} and~\ref{l:ctr-images} show that contraction by
    \eqref{eq:contraction-map} defines a map from configurations to the pairs
    described in the theorem:
    \[
        \Theta(\lambda,S)\coloneq
        \bigl(\lambda',\{\pi(\gamma):\gamma\in S\}\bigr).
    \]
    This map has the required arc and component counts.

    To reconstruct a configuration, we first determine which boundary points
    of a fibre occur as arc endpoints. For an arc $z\to t$ of $\lambda'$, the
    source group $D\cap U_z$ consists of all of $U_z$, except that its minimum
    is omitted precisely when a selected component ends in $U_z$.
    The target group $R\cap U_t$ similarly omits its maximum precisely when
    a selected component starts in $U_t$.
    Indeed, the proof of Lemma~\ref{l:ctr-arcs} leaves only these two possible
    omissions. In a nontrivial fibre $[c,c+d]$, the point $c$ has no successor
    exactly when the component containing $c$ ends there; $c+d$ has no
    predecessor exactly when the component containing $c+d-1$ starts at $c+d-1$.
    A singleton fibre on an arc contributes its only point and contains no
    selected link. We now construct the inverse by expanding each vertex into
    a fibre and recovering the increasing arc matchings.

    Fix a target pair $(\lambda',\mathcal A)$ on $[n-k]$.
    For each vertex $z$, let $L_z$ be the list of intervals containing $z$,
    ordered by their left ends, and put $d_z=|L_z|$. Replace $z$ by
    \[
        U_z=[c_z,c_z+d_z],\qquad c_z=z+\sum_{v<z}d_v.
    \]
    Since $\sum_z d_z=\sum_{I\in\mathcal A}|I|=k$, these fibres partition $[n]$.
    For each arc $z\to t$ of $\lambda'$, the list of intervals continuing
    across that arc is obtained from $L_z$ by deleting the interval ending
    at $z$, if any, and from $L_t$ by deleting the interval starting at
    $t$, if any. The two lists agree. The antichain condition ensures that
    an ending interval is first and a starting interval is last, with at
    most one of each.

    Match $U_z$, omitting its minimum when an interval ends at $z$,
    to $U_t$, omitting its maximum when an interval starts at $t$,
    in increasing order. Both sets have $h+1$ points, where $h$ is the
    length of the common list. Do this for every arc of $\lambda'$.
    The groups occur in increasing order on both sides, since the successor
    map of $\lambda'$ is increasing. Thus these pairs form an increasing
    bijection between their source and target sets, with every source
    smaller than its target.
    They therefore define a nonnesting partition $\lambda$.

    Assign to $I\in\mathcal A$ the link
    $c_z+a_z(I)-1$ at each $z\in I$, where $a_z(I)$ is the position of
    $I$ in $L_z$, numbered from $1$.
    When $I$ continues across $z\to t$, its assigned link has position
    $a_z(I)-1$ in the source group if an interval ends at $z$, and
    $a_z(I)$ otherwise. This is the position of $I$ in the common
    continuing list and of its assigned link in the target group.
    Its shift by $1$ has the next position. The increasing matching
    therefore makes the assigned links and their shifts into successor chains.
    At the left end of $I$, its shift is the omitted maximum of a fibre;
    at its right end, its link is the omitted minimum.
    Each link belongs to $R(\lambda)$ and each shift to $D(\lambda)$.
    The assigned links thus form a proper suffix of a block, and their
    shifts a proper prefix of another block: they give a gluing
    component. Different intervals give different components; let $S$
    be their collection.

    \medskip

    The selected links in $U_z$ are exactly
    $c_z,\ldots,c_z+d_z-1$, so contraction restores its vertex $z$.
    The arc groups recover the arcs of $\lambda'$, and the component
    images recover $\mathcal A$.
    Conversely, for a contracted configuration Lemma~\ref{l:ctr-images}
    gives $|U_z|=d_z+1$, so expansion restores the original fibres
    and their selected links. The endpoint description established at the
    start of this proof recovers the original source and target groups;
    Lemma~\ref{l:ctr-arcs} then forces their original increasing matching.
    Thus the reconstruction recovers the original arcs and selected links.
    Since a link belongs to at most one gluing component, it also recovers
    $S$, and is inverse to $\Theta$.
\end{proof}

\section{Block weights and the enumeration formula}\label{sec:blocks}

A block of size $s\ge2$ has $s-2$ internal elements, which we identify
with $[s-2]$ in increasing order.
For $s\ge2$, let $W_s(z)$ be the generating polynomial for antichains
$\mathcal A$ under inclusion of nonempty intervals in $[s-2]$, weighted
by $z^{\sum_{I\in\mathcal A}|I|}$. Set $W_1(z)\coloneq1$, and put
\[
    A(z,y)\coloneq\sum_{s\ge1}W_s(z)y^{s-1}.
\]
A partition is \emph{noncrossing} if it has no arcs $(a,b),(c,d)$ with
$a<c<b<d$.
For $s\ge2$, replacing $[a,b]$ by $(a,b+1)$ identifies these antichains
with nonnesting partitions of $[s-1]$, preserving total length.
Scanning from left to right, match each right endpoint to the largest
unmatched left endpoint, closing before opening at each point.
This gives a noncrossing partition with the same endpoint sets.
Both matchings are unique, and total arc length depends only on the
endpoint sets. This is therefore a length-preserving bijection.
The arc lengths in each block $C$ sum to $\max C-\min C$.
Thus $A(z,y)$ enumerates noncrossing partitions, with $y$ recording their
size and each block $C$ weighted by $z^{\max C-\min C}$.
Decompose at the block containing $1$. Vertices in its inner gaps also
contribute to its span, so each inner gap contributes $A(z,zy)$; the
final gap contributes $A(z,y)$. Hence
\[
    A(z,y)=1+\frac{yA(z,y)}{1-zyA(z,zy)}
    =\frac{1}{1-\dfrac{y}{1-zyA(z,zy)}}.
\]
Iteration gives the Stieltjes continued fraction
\begin{equation}\label{eq:block-sfrac}
    A(z,y)=\cfrac{1}{1-\cfrac{y}{1-\cfrac{zy}{1-\cfrac{zy}
                {1-\cfrac{z^2y}{1-\cfrac{z^2y}{1-\ddots}}}}}}.
\end{equation}

A \emph{composition} $d=(d_1,\ldots,d_h)$ of $m$, written $d\models m$, is an
ordered tuple of positive integers with sum $m$.

\begin{theorem}\label{th:ideal-count}
    Let $n\ge1$ be an integer and $q$ a prime power. Let $V_q(m)$ count the
    subspaces of $\mathbb F_q^m$ contained in no coordinate hyperplane, as given by
    \eqref{eq:V-total}. For each $d=(d_1,\ldots,d_h)\models m$, put
    \[
        k(d)\coloneq\sum_{i=1}^h\left\lfloor\frac i2\right\rfloor d_i,
        \qquad b(d)\coloneq n-m-k(d).
    \]
    Then the number $P_n(q)$ of ideals of $\NT{n}(\F)$ is
    \begin{equation}\label{eq:composition-count}
        \begin{aligned}
            P_n(q)=1+\sum_{m=1}^{n-1}\frac{V_q(m)}{m+1}
            \sum_{\substack{d\models m\\ b(d)\ge1}}
             & \binom{n-k(d)}m\binom{b(d)+d_1-1}{d_1}                       \\
             & {}\times(q-1)^{k(d)}\prod_{i=2}^h\binom{d_{i-1}+d_i-1}{d_i}.
        \end{aligned}
    \end{equation}
    Empty sums and products are interpreted as $0$ and $1$, respectively.
\end{theorem}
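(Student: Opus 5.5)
Combining Theorem~\ref{th:master} with Theorem~\ref{th:contraction} gives
\[
 P_n(q)=\sum_{k\ge0}(q-1)^k\sum_{(\lambda',\mathcal A)}V_q\bigl(m(\lambda')\bigr),
\]
where $(\lambda',\mathcal A)$ runs over nonnesting partitions of $[n-k]$ with antichains of internal intervals of total length $k$. The exponent $m(\lambda)-k(S)+|S|$ of $V_q$ becomes $m(\lambda')$. The antichain choices are independent in each block, so for fixed $m$ we need the sum over nonnesting partitions $\lambda'$ of $[N]$ with $m$ arcs of $\prod_{C\in\lambda'}W_{|C|}(z)$, evaluated at $z^k$ with $N=n-k$. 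The term $m=0$ is the discrete partition and contributes the leading $1$.

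To evaluate this weighted count, I will first pass to noncrossing partitions. The standard endpoint-preserving bijection between nonnesting and noncrossing partitions (close to the left-right scan used for \eqref{eq:block-sfrac}) preserves the multiset of block sizes. The weight is therefore a multiplicative function of block sizes, and we may count noncrossing partitions instead.

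For noncrossing partitions with block weights $W_s$, the decomposition at the block containing $1$ gives a functional equation. Let $F(x,u)$ count partitions with $x$ marking elements and $u$ marking arcs (a block of size $s$ has $s-1$ arcs). Then
\[
F=1+\sum_{s\ge1}W_s(z)\,u^{s-1}x^sF^s
\]
(appropriately indexed), that is, $F=1+xF\,A(z,uxF)$. Putting $G=xF$ up to a shift gives a Lagrange form $G=x\,\Phi(G)$ with $\Phi(w)=1+wA(z,uw)$ or similar. Lagrange inversion then yields
\[
[x^N u^m]F=\frac{1}{N}[w^{N-1}u^m]\,\Phi(w)^N,
\]
which reduces to a sum over $j$ of $\binom{N}{j}$ times $[y^m]A(z,y)^{j}$, with $j=m+1$ nontrivial blocks absorbed appropriately. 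This should produce the prefactor $\frac{1}{m+1}\binom{n-k}{m}$ after reindexing.

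Next I will extract $[y^m]A(z,y)^{j}$ from the S-fraction \eqref{eq:block-sfrac}. Its coefficients are $\alpha_1=1$, $\alpha_2=\alpha_3=z$, $\alpha_4=\alpha_5=z^2$, and so on, so $\alpha_i=z^{\lfloor i/2\rfloor}$. Flajolet's formula for Stieltjes--Rogers polynomials expresses $[y^m]$ of a power of the continued fraction as a sum over compositions $d=(d_1,\dots,d_h)\models m$ of
\[
\prod_i\alpha_i^{d_i}\prod_{i\ge2}\binom{d_{i-1}+d_i-1}{d_i},
\]
with a first factor $\binom{j+d_1-1}{d_1}$ accounting for the power $j$ (paths starting at height $j$, or equivalently a $j$-fold product). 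The monomial is then $z^{k(d)}$, which forces $k=k(d)$ and hence $N=n-k(d)$. The first binomial becomes $\binom{b(d)+d_1-1}{d_1}$ with $b(d)=n-m-k(d)$, the number of blocks of $\lambda'$. The condition $b(d)\ge1$ is automatic.

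The main obstacle is getting the Lagrange inversion bookkeeping right: choosing the series whose power is taken so that the binomial $\binom{b+d_1-1}{d_1}$ arises with exponent equal to the number of blocks $b=N-m$, while the factor $\frac{1}{m+1}\binom{N}{m}$ emerges cleanly. I would first try setting $F=1/(1-xB)$-style expressions, with $B$ tracking blocks by first element, and apply Lagrange inversion in the variable marking blocks. I would check the result against small $n$ (for example $n=2,3$) to fix the indices.
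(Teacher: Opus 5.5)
Your outline follows the paper's proof in structure: contraction plus Theorem~\ref{th:master}, then the first-block decomposition $F=1+xF\,A(z,uxF)$, Lagrange inversion, and Flajolet's formula. However, the passage from nonnesting to noncrossing partitions rests on a false claim. By Lemma~\ref{l:ctr-model} and the uniqueness of the left-to-right scan, there is only one endpoint-preserving bijection, and it does not preserve block sizes. For example, $\{\{1,3,5\},\{2,4,6\}\}$ has $D=\{1,2,3,4\}$ and $R=\{3,4,5,6\}$, and the scan returns $\{\{1,6\},\{2,3,4,5\}\}$. Your weight $\prod_C W_{|C|}(z)$ is not an endpoint statistic: here $W_3^2=(1+z)^2\neq 1+2z+2z^2=W_2W_4$. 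The scan works in \S\ref{sec:blocks} only because total arc length depends on $D$ and $R$ alone. What this step needs is the theorem that nonnesting and noncrossing partitions of $[N]$ are equidistributed by \emph{type}, i.e.\ by the multiset of block sizes. This is Kreweras's count on the noncrossing side and Athanasiadis's on the nonnesting side, and it is what the paper cites from \cite[solution to Exercise~5.44]{StanleyEC2}. Since your weight is multiplicative in block sizes, citing it repairs the step; no endpoint-preserving bijection can.

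The Lagrange step also has an index slip, and fixing it removes the obstacle you anticipate. With $G=xF$ and your $\Phi(w)=1+wA(z,uw)$, one gets $[x^Nu^m]F=[x^{N+1}u^m]G=\frac{1}{N+1}[w^Nu^m]\Phi(w)^{N+1}$; your expression is $[x^{N-1}u^m]F$. Since $[u^m]A(z,uw)^j=w^m[y^m]A(z,y)^j$, only $j=N-m$ survives. That is the number of blocks, not $m+1$. Using $\frac{1}{N+1}\binom{N+1}{N-m}=\frac{1}{m+1}\binom{N}{m}$, this gives
\[
 [x^Nu^m]F=\frac{1}{m+1}\binom{N}{m}[y^m]A(z,y)^{N-m}.
\]
To get the factor $\binom{b+d_1-1}{d_1}$, apply Flajolet's formula to $(1-vA(z,y))^{-1}$ and extract $[v^b]$, as the paper does; ``paths starting at height $j$'' is not the right picture. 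Taking $[z^k]$ with $N=n-k$ then forces $k=k(d)$ and $b=b(d)\ge1$, which yields \eqref{eq:composition-count}. Inverting in $x$ with $z$ kept free is a legitimate variant of the paper's route. The paper instead substitutes $z=(q-1)x$, inverts for $uF$ in $u$, and expands $(1-xA)^{-(m+1)}$. Your route reaches the Kreweras-type coefficient in one step.
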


\begin{proof}
    Put
    \[
        \Phi(x,y)\coloneq\frac{1}{1-xA((q-1)x,y)},
    \]
    where $A$ is given by \eqref{eq:block-sfrac}. Let $F(x,u)$ enumerate nonnesting
    partitions, including the empty partition, with each block of size $s$ weighted
    by $x^su^{s-1}W_s((q-1)x)$. The factor $x^su^{s-1}$ records its vertices and
    arcs; an interval of length $j$ contributes $(q-1)^jx^j$ through $W_s((q-1)x)$.
    Thus the power of $x$ records the number of vertices after expansion, while the
    power of $u$ records the number of arcs after contraction. Nonnesting and
    noncrossing partitions have the same distribution of block sizes \cite[solution
    to Exercise~5.44]{StanleyEC2}, so $F$ also enumerates noncrossing partitions
    with these weights. Decomposing at the block containing $1$ gives
    \[
        F=1+xF A((q-1)x,uxF),\qquad F=\Phi(x,uxF).
    \]
    Write $[x^ay^b]$ for extraction of the coefficient of $x^ay^b$. Lagrange
    inversion for $uF$ now yields
    \[
        [u^m]F=\frac{x^m}{m+1}[y^m]\Phi(x,y)^{m+1}.
    \]
    By \eqref{eq:main} and Theorem~\ref{th:contraction},
    $P_n(q)=\sum_{m\ge0}V_q(m)[x^nu^m]F$. Since $F(x,0)=(1-x)^{-1}$, the term $m=0$
    contributes $1$; all other terms have $1\le m\le n-1$. Hence
    \begin{equation}\label{eq:coefficient-count}
        P_n(q)=1+\sum_{m=1}^{n-1}\frac{V_q(m)}{m+1}
        [x^{n-m}y^m]\Phi(x,y)^{m+1}.
    \end{equation}

    We evaluate the remaining coefficients using Flajolet's formula for
    Stieltjes--Rogers polynomials \cite[Proposition~3B, p.~132]{Flajolet1980}.
    Apply it to $(1-vA(z,y))^{-1}$, whose successive continued-fraction numerators
    are $v$ and $yz^{\lfloor i/2\rfloor}$ for $i\ge1$. Taking the coefficient of
    $v^b y^m$, corresponding to compositions $(b,d_1,\ldots,d_h)$ of $b+m$, gives,
    for $b,m\ge1$,
    \begin{equation}\label{eq:rogers-blocks}
        [y^m]A(z,y)^b=
        \sum_{d\models m}z^{k(d)}\binom{b+d_1-1}{d_1}
        \prod_{i=2}^h\binom{d_{i-1}+d_i-1}{d_i}.
    \end{equation}
    On the other hand,
    \[
        \Phi(x,y)^{m+1}
        =\sum_{b\ge0}\binom{m+b}{b}x^b A((q-1)x,y)^b.
    \]
    For $m\ge1$, the term $b=0$ does not contribute. After substituting $z=(q-1)x$
    into \eqref{eq:rogers-blocks}, the term indexed by $b$ and $d$ in this
    expansion has $x$-degree $b+k(d)$. Thus $[x^{n-m}y^m]$ selects
    $b=n-m-k(d)=b(d)\ge1$. Since $\binom{m+b(d)}{b(d)}=\binom{n-k(d)}m$,
    substitution into \eqref{eq:coefficient-count} proves
    \eqref{eq:composition-count}.
\end{proof}

\begin{corollary}\label{c:polynomial}
For $n\ge5$, the number $P_n(q)$ of ideals of $\NT{n}(\F)$ is a
polynomial in $q$ of degree
$\lfloor(n-1)^2/4\rfloor$, with leading coefficient $1$ when $n$ is
odd and $2$ when $n$ is even.
\end{corollary}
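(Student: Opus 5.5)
The plan is to read everything off the explicit formula~\eqref{eq:composition-count} of Theorem~\ref{th:ideal-count}. The sum there is finite, and every summand is a rational constant times $V_q(m)(q-1)^{k(d)}$. By~\eqref{eq:V-total}, $V_q(m)$ is a polynomial in $q$, since Gaussian binomials are. Hence $P_n(q)$ is a polynomial in $q$ with rational coefficients. It then suffices to find the single summand of maximal degree and to show that every other summand has strictly smaller degree, so that no cancellation can occur.

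\emph{Step 1: degree and leading coefficient of $V_q(m)$.} In~\eqref{eq:V-total}, the term with index $i$ has degree at most $\max_t t(m-i-t)=\lfloor (m-i)^2/4\rfloor$. So only $i=0$ can reach degree $\lfloor m^2/4\rfloor$, and there the top degree comes from $\qbin{m}{t}$ with $t(m-t)=\lfloor m^2/4\rfloor$. Each Gaussian binomial is monic.
\begin{itemize}
\item For even $m$ there is one such $t$, namely $t=m/2$, so $V_q(m)$ has leading coefficient $1$.
\item For odd $m$ there are two, $t=(m\pm1)/2$, so the leading coefficient is $2$.
\end{itemize}
For $m\ge1$ the degree is $\lfloor m^2/4\rfloor$. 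The factor $(q-1)^{k(d)}$ is monic of degree $k(d)$.

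\emph{Step 2: bound the degree of each summand.} The summand for $(m,d)$ has degree $\lfloor m^2/4\rfloor+k(d)$. The constraint $b(d)\ge1$ gives $k(d)\le n-1-m$, so this degree is at most
\[
g(m)\coloneq\lfloor m^2/4\rfloor+n-1-m .
\]
When $m=n-1$, the constraint forces $k(d)=0$. The only composition with $k(d)=0$ is $d=(m)$, since $\lfloor i/2\rfloor\ge1$ for $i\ge2$. Its weight is
\[
\frac{1}{n}\binom{n}{n-1}\binom{1+m-1}{m}=1,
\]
so this summand is exactly $V_q(n-1)$. Its degree is $\lfloor (n-1)^2/4\rfloor$, with leading coefficient $1$ for odd $n$ and $2$ for even $n$, which is the claimed value.

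\emph{Step 3: every other summand has smaller degree.} For $1\le m\le n-2$, the function $g$ is essentially convex in $m$, so its maximum is at $m=1$ or $m=n-2$. We have $g(1)=n-2$ and $g(n-2)=\lfloor (n-2)^2/4\rfloor+1$. We must check that both are strictly less than $\lfloor (n-1)^2/4\rfloor$ for $n\ge5$.
\begin{itemize}
\item The difference $\lfloor (n-1)^2/4\rfloor-\lfloor (n-2)^2/4\rfloor$ equals $\lfloor (n-1)/2\rfloor$, which is at least $2$ for $n\ge5$.
\item At $n=5$ we have $n-2=3<4$, and the gap only grows with $n$.
\end{itemize}
This is exactly where the hypothesis $n\ge5$ is used; for $n=4$ the values $g(2)=2$ and $\lfloor 9/4\rfloor=2$ tie. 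The summand $1$ for the zero ideal has degree $0$ and is harmless.

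The main obstacle is this convexity-and-floor bookkeeping in Step~3, together with making Step~1 rigorous. For Step~1 I would verify that for $i\ge1$ the degree $\lfloor (m-i)^2/4\rfloor$ is strictly below $\lfloor m^2/4\rfloor$, so that no term with $i\ge1$ reaches the top degree. I would handle convexity by treating $m$ even and $m$ odd separately, which makes $g$ a quadratic in $m$ on each parity class.
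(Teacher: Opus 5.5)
Your proof is correct and is essentially the paper's argument. The paper reads the same constraint $n=m+k+b$ with $b\ge1$ off the configuration sum~\eqref{eq:main} via Theorem~\ref{th:contraction} rather than off~\eqref{eq:composition-count}, then uses the same bound $n-1-m+\lfloor m^2/4\rfloor<\lfloor(n-1)^2/4\rfloor$ for $1\le m<n-1$ and isolates the single top term $V_q(n-1)$ as you do. One harmless slip in your Step~1: for $m=1$ the $i=1$ term of~\eqref{eq:V-total} also has degree $0$ and cancels one of the two degree-$0$ contributions, so $V_q(1)=1$ and its leading coefficient is not $2$; this does not matter, since you only need the leading coefficient at $m=n-1\ge4$.
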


\begin{proof}
Polynomiality follows from~\eqref{eq:main} and~\eqref{eq:V-total}.
For $m\ge2$, the term $i=0$ in~\eqref{eq:V-total} has degree
$\lfloor m^2/4\rfloor$, and every term with $i>0$ has smaller degree.
Thus $V_q(m)$ has this degree, with leading coefficient $1$ for even
$m$ and $2$ for odd $m\ge3$; also $V_q(1)=1$.
For a configuration $(\lambda,S)$ counted in~\eqref{eq:main}, write
$m=m(\lambda)-k(S)+|S|$ and $k=k(S)$.
If the contracted partition has $b$ blocks, Theorem~\ref{th:contraction}
gives $n=m+k+b$ with $b\ge1$.
The case $m=n-1$ forces $k=0$ and $b=1$, and hence contributes the
single term $V_q(n-1)$. If $1\le m<n-1$ and $n\ge5$, then
\[
 \deg\bigl((q-1)^kV_q(m)\bigr)
 \le n-1-m+\lfloor m^2/4\rfloor
 <\lfloor(n-1)^2/4\rfloor.
\]
The term with $m=0$ is constant. Hence $V_q(n-1)$ alone determines
the leading term.
\end{proof}

\printbibliography

\end{document}